\documentclass{article}
\usepackage[numbers,sort&compress,merge]{natbib}
\bibpunct[, ]{(}{)}{,}{n}{,}{,}
\usepackage{mathtools, amsthm, amsfonts}
\usepackage{float}
\usepackage[font=scriptsize,labelformat=simple,labelsep=colon]{subcaption}
\usepackage[font={scriptsize},hypcap=true,labelsep=colon]{caption}
\theoremstyle{plain}
\newtheorem*{theorem*}{Theorem}

\DeclarePairedDelimiter\norm{\lVert}{\rVert}
\title{%
    \normalsize{\textbf{Properties of an Infinite Dimensional Banach Space over GF(2)}}\\[1em]
    \normalsize{Samuel Gomez, James Rose, Ryan Maguire}\\[0.2em]
    \normalsize{University of Massachusetts, Lowell}
}
\date{\vspace{-5em}}
\author{\vspace{-5em}}
\begin{document}
    \begingroup
    \let\center\flushleft
    \let\endcenter\endflushleft
    \maketitle
    \endgroup
    \vspace{4mm}
    \begin{abstract}
        A banach space $X$ is a normed vector space, which is complete with respect
        to the metric induced by the norm. Given a bounded linear operator $T$ acting
        on a banach space $X$, $T$ is said to attain its norm if there is a unit
        vector $\mathbf{x}\in{X}$, such that $\norm{T\mathbf{x}}=\norm{T}$. The existence
        of an infinite dimensional banach space $X$, in which each
        bounded linear operator acting on $X$ attains its norm, is still undetermined.
        This question was posed by M.I. Ostrovskii at St. John's University. In this
        paper we show that if an infinite dimensional banach space is considered
        over GF(2), then it is possible for every bounded
        linear operator to attain its norm.
    \end{abstract}
    \section*{Introduction}
    \normalsize
    The Galois Field of two elements, denoted GF(2), is the field
    containing 0 (zero) and 1 (one). The operations of
    addition and multiplication are defined as follows:
    \par
    \begin{minipage}[b]{0.4\textwidth}
        \centering
        \begin{table}[H]
            \centering
            \begin{tabular}{c|cc}
                +&0&1\\
                \hline
                0&0&1\\
                1&1&0
            \end{tabular}
            \caption{Addition in GF(2).}
            \label{tab:Addition_In_F2}
        \end{table}
    \end{minipage}
    \hfill
    \begin{minipage}[b]{0.4\textwidth}
        \centering
        \begin{table}[H]
            \centering
            \begin{tabular}{c|cc}
                $\cdot$&0&1\\
                \hline
                0&0&0\\
                1&0&1
            \end{tabular}
            \caption{Multiplication in GF(2).}
            \label{tab:Multiplication_In_F2}
        \end{table}
    \end{minipage}
    \par
    Since GF(2) satisfies the axioms required to be a field, we may consider
    vector spaces over GF(2), which may be endowed with a norm. In order to
    meaningfully define a norm on a vector space over GF(2), we define a
    function $|\cdot|:\mathrm{GF(2)}\rightarrow\mathbb{R}$
    which acts as an absolute value.
    \begin{equation}
        |0|=0,\quad|1|=1
    \end{equation}
    \par
    This definition of absolute value trivially satisfies
    non-negativity, positive-definiteness, multiplicativity, as well
    as the triangle inequality. So it is indeed sensible to define
    the absolute value for elements of GF(2) in this way.
    \begin{theorem*}
        There exists an infinite dimensional banach space S over GF(2)
        such that each bounded linear operator on S attains its norm.
    \end{theorem*}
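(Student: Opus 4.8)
The plan is to exhibit the space explicitly rather than to argue abstractly: let $S$ be any infinite-dimensional vector space over $\mathrm{GF}(2)$ — for concreteness the space $\bigoplus_{n=1}^{\infty}\mathrm{GF}(2)$ of finitely supported $\mathrm{GF}(2)$-valued sequences — equipped with the \emph{discrete norm}
$\norm{\mathbf{0}}=0$ and $\norm{\mathbf{x}}=1$ for every $\mathbf{x}\neq\mathbf{0}$. The first step is to check that this really is a norm in the sense appropriate to $\mathrm{GF}(2)$: non-negativity and positive-definiteness are immediate; homogeneity $\norm{\lambda\mathbf{x}}=|\lambda|\norm{\mathbf{x}}$ is trivial since the only scalars are $0$ and $1$ with $|0|=0$, $|1|=1$; and the triangle inequality $\norm{\mathbf{x}+\mathbf{y}}\le\norm{\mathbf{x}}+\norm{\mathbf{y}}$ holds because the left-hand side never exceeds $1$ while the right-hand side vanishes only when $\mathbf{x}=\mathbf{y}=\mathbf{0}$.

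The second step is completeness. The metric induced by $\norm{\cdot}$ is $d(\mathbf{x},\mathbf{y})=\norm{\mathbf{x}+\mathbf{y}}$ (recall that $-\mathbf{y}=\mathbf{y}$ in characteristic $2$), which is precisely the discrete metric, equal to $0$ when $\mathbf{x}=\mathbf{y}$ and to $1$ otherwise. A Cauchy sequence in a discrete metric space is eventually constant, hence convergent, so $S$ is complete; being of infinite dimension, $S$ is an infinite-dimensional Banach space over $\mathrm{GF}(2)$.

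The third step treats operators. A linear operator $T$ on $S$ is simply an additive map, scalar multiplication being automatic over $\mathrm{GF}(2)$. For every $\mathbf{x}$ we have $\norm{T\mathbf{x}}\le 1\cdot\norm{\mathbf{x}}$, since the left side is $0$ whenever $\mathbf{x}=\mathbf{0}$ and at most $1=\norm{\mathbf{x}}$ otherwise; thus every linear operator is automatically bounded with $\norm{T}\le 1$, so $\norm{T}\in\{0,1\}$. For attainment, observe that every nonzero vector of $S$ is a unit vector. If $T=\mathbf{0}$, then $\norm{T}=0$ and $\norm{T\mathbf{x}}=0$ for any unit vector $\mathbf{x}$. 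If $T\neq\mathbf{0}$, choose $\mathbf{x}_0$ with $T\mathbf{x}_0\neq\mathbf{0}$; then $\mathbf{x}_0$ is a unit vector, $\norm{T\mathbf{x}_0}=1$, and since $\norm{T}\le 1$ we conclude $\norm{T}=1=\norm{T\mathbf{x}_0}$. In either case $T$ attains its norm.

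I do not anticipate a genuine obstacle; the real content of the theorem is the observation that the norm structure over $\mathrm{GF}(2)$ is so rigid that the usual difficulty simply evaporates. The one point requiring care — and the reason the discrete norm is the right choice — is that over $\mathrm{GF}(2)$ one cannot rescale a vector to have norm $1$, so one must ensure both that $S$ actually contains unit vectors and that the supremum defining $\norm{T}$ is attained on the unit sphere; the discrete norm dispatches both issues at once. It is also worth remarking why one should \emph{not} try a finer norm such as $\norm{\mathbf{x}}=\sum_{x_i=1}2^{-i}$ on $\mathrm{GF}(2)^{\mathbb{N}}$: there the only unit vector is $(1,1,1,\dots)$, unbounded operators exist, and among the bounded ones attainment is no longer automatic, reintroducing exactly the phenomenon the theorem is meant to sidestep. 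Finally, the degenerate case $T=\mathbf{0}$ (where $\norm{T}=0$) should be recorded separately to keep the argument airtight.
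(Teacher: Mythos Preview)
Your proposal is correct and follows essentially the same route as the paper: the same space $S=\bigoplus_{n\ge 1}\mathrm{GF}(2)$, the same discrete norm, the same verification that the induced metric is discrete (hence $S$ is complete), and the same two-case argument that any $T$ attains its norm. Your additional observations---that every linear map is automatically bounded with $\norm{T}\le 1$, and the cautionary remark about non-discrete norms---go slightly beyond what the paper records but do not alter the approach.
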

    \begin{proof}
        Define an infinite dimensional banach space $S$ over GF(2) as follows:
        \begin{equation}
            S=\{\;(s_{1},s_{2},\dots )\;\;|\;\;s_{i}\neq0
                \text{ for finitely many }i\in\mathbb{N}\;\} 
        \end{equation}
        Vector addition and scalar multiplication are defined entry-wise.
        \begin{align}
            \mathbf{x}+\mathbf{y}=(x_{1}+y_{1},x_{2}+y_{2},\dots)
            &&\alpha\mathbf{x}=(\alpha{x}_{1},\alpha{x}_{2},\dots)
        \end{align}
        Note here that the operations $x_i+y_i$ and $\alpha x_i$ occur
        in GF(2). The space $S$ will be given the norm
        $\|\,\|_{S}:S\rightarrow\mathbb{R}$ defined by:
        \begin{align}
            \norm{\mathbf{x}}_{S}=
            \begin{cases}
                0,&\mathbf{x}=\mathbf{0}\\
                1,&\mathbf{x}\neq\mathbf{0}
            \end{cases}
        \end{align}
        Here, the zero vector is taken to be the sequence of all zeros.
        This space has the canonical basis, where $\mathbf{e}_{n}$ has a
        1 in the $n^{th}$ spot and 0 in the rest.
        \begin{equation}
            \mathbf{e}_n=(0,\dots,0,1,0,\dots )
        \end{equation}
        First we must verify that $S$ is a vector space. Since addition is performed
        entry-wise, associativity and commutativity are inherited properities of
        addition in the field. The identity element is the sequence of all zeros.
        Furthermore, since $1+1=0$ in GF(2), every element of $S$ is its own inverse
        with respect to addition. Now let $\alpha$ and $\beta$ be elements of GF(2),
        and $\mathbf{x}$ be in $S$. Then:
        \begin{equation}
            \alpha(\beta\mathbf{x})=
                \begin{cases}
                    \mathbf{0},&\alpha=0\textrm{ or }\beta=0\\
                    \mathbf{x},&\alpha=1\textrm{ and }\beta=1
                \end{cases}
        \end{equation}
        Similiary for $(\alpha\beta)\mathbf{x}$, and thus
        scalar multiplication is compatible with field multiplication. The identity
        element of scalar multiplication is $1\in \text{GF(2)}$. Finally, scalar
        multiplication trivially distributes over vector addition as well as field
        addition. Thus $S$ is a vector space.
        \par\hfill\par
        Now we verify that $S$ is a normed space. By the definition of $\| \,\|_S$, only
        the zero vector has norm zero, and all others have norm one. Thus
        positive-definiteness of the norm is satisfied. Now let $\alpha$ be
        an element of GF(2), and let $\mathbf{x}$ be a zero vector
        in $S$. If $\alpha$ is one then we observe:
        \begin{equation}
            \norm{\alpha\mathbf{x}}_{S}=\norm{\mathbf{x}}_{S}
                =1\norm{\mathbf{x}}_{S}
                =|\alpha|\norm{\mathbf{x}}_{S}
        \end{equation}
        If $\alpha$ is zero then instead we have:
        \begin{align*}
            \norm{\alpha\mathbf{x}}_{S}=\norm{\mathbf{0}}_{S}
                =0=0\norm{\mathbf{x}}_{S}
                =|\alpha|\norm{\mathbf{x}}_{S}
        \end{align*}
        In either case the result is that $\|\,\|_S$ is absolutely homogeneous.
        Moving forward, if $\mathbf{x}$ and $\mathbf{y}$ are distinct non-zero vectors
        in $S$, the norm of their sum will equal one. However the sum of their norms will be
        two, and thus be greater. For all other cases - one of them is the zero vector, both
        of them are the zero vector, or they are inverses - the triangle inequality trivially
        holds. Thus $\|\,\|_S$ is a norm on the vector space $S$.
        \par
        Lastly, in order for $S$ to be a Banach space, it must be complete with
        respect to the metric induced by the norm. If $\mathbf{x}$ and $\mathbf{y}$
        are non-zero vectors in $S$, then the distance between them is given as:
        \begin{align*}
            \norm{\mathbf{x}-\mathbf{y}}_{S}
                =\norm{\mathbf{x}+(-\mathbf{y})}_{S}
                =\norm{\mathbf{x}+\mathbf{y}}_{S}
                =\begin{cases} 
                    1,&\mathbf{x}\neq\mathbf{y}\\
                    0,&\mathbf{x}=\mathbf{y}
                \end{cases}
        \end{align*}
        We see that the metric induced by the norm is the discrete metric,
        so for a sequence in $S$ to be Cauchy, it must eventually be constant.
        Thus every Cauchy sequence in $S$ converges.
        Now that we have verified that $S$ is a Banach space, we must show
        that every operator $T$ in the space of bounded linear operators
        acting on $S$, $L(S)$ attains its norm. Let $T\in L(S)$ be a
        non-zero operator. The norm of $T$, $\|T\|$ is defined as:
        \begin{equation}
            \norm{T}=\sup\{\,\,\norm{T\mathbf{x}}_{S}\,\,\;|\,\,\;
                \mathbf{x}\in S,\,\,\;\norm{\mathbf{x}}_{S}=1\,\,\}
        \end{equation}
        For any vector in $S$, the norm of its image under $T$
        can only be either zero or one. Since $T$ was assumed
        not to be the zero operator, we obtain:
        \begin{equation}
            \norm{T}=1
        \end{equation}
        Furthermore, there must exist some $\tilde{\mathbf{x}}\in S$ such that
        $\norm{T\tilde{\mathbf{x}}}_{S}=1$, otherwise $T$ would have to be
        the zero operator. It should also be noted that the zero operator
        attains its norm via any point in $S$. Therefore, every operator
        in $L(S)$ attains its norm. This concludes the proof.
    \end{proof}
    
    \section*{Remarks}
    It is worth noting that by definition, $S$ cannot be a Hilbert space,
    since given some distinct and non-zero $\mathbf{x},\mathbf{y}\in S$,
    the parallelogram identity:
    \begin{equation}
        \norm{\mathbf{x}+\mathbf{y}}_{S}^{2}+\norm{\mathbf{x}-\mathbf{y}}_{S}^{2}
        =2\norm{\mathbf{x}}_{S}^{2}+2\norm{\mathbf{y}}_{S}^{2}
    \end{equation}
    Is true if and only if $\mathbf{x}=\mathbf{y}=\mathbf{0}$. Now consider the subset of sequences, $S_m^p$, which has $m$ 1's, all of which occur before or at the $p^{th}$ position in the sequence. There are $p$-choose-$m$ such sequences. 
    \begin{align}
        \bigcup\limits_{m=0}^p S_m^p
    \end{align}
    The union of these sets, as shown above, consists of all sequences with zeros after the $p^{th}$ entry. This union is a finite union of finite sets, and thus finite. The infinite union for all $p$ will be exactly our space $S$.
    \begin{align}
        S = \bigcup\limits_{p=0}^\infty \bigcup\limits_{m=0}^p S_m^p .
    \end{align}
    This is a countable union of finite sets, and thus $S$ is countable. Furthermore, because $S$ has the discrete topology, the only dense subset of $S$ is $S$ itself. Thus $S$ is a countable dense subset, and $S$ is trivially separable. 
    

\begin{thebibliography}{99}
    \bibitem{Gou02}
        Maslyuchenko, V.K., Plichko, A.M.,
        \emph{Some Open Problems on Functional Analysis and Function Theory},
        Extracta Math. \textbf{20}(2005), no. 1, 51-70. MR2149124
    \end{thebibliography}
\end{document}